\newtheorem{theorem}{Theorem}[section]
\newtheorem{lemma}[theorem]{Lemma}
\newtheorem{proposition}[theorem]{Proposition}
\newtheorem{corollary}[theorem]{Corollary}
\theoremstyle{definition}
\newtheorem{definition}[theorem]{Definition}
\newtheorem{example}[theorem]{Example}
\newtheorem{remark}[theorem]{Remark}
\numberwithin{equation}{section}
\date{}
\title{Nakayama's Lemma on $\textbf{Act}-S$}
\author{Kamal Ahmadi\\ Ali Madanshekaf\\  Department of Mathematics,\\ Semnan  University,\\ P. O. Box 35131-19111,\\ Semnan, Iran.\\
emails:kamal.ahmadi.math@gmail.com\\ amadanshekaf@semnan.ac.ir}
\begin{document}
\maketitle
\begin{abstract}
A crucial lemma on module theory  is Nakayama's lemma~\cite{AF}. In this article,  we shall investigate some forms of Nakayama's lemma in the category of right acts over a given monoid $S$ with identity 1. More precisely, among other things, we show that equality $AI=A$ for some proper ideal $I$ of $S$ implies $A=\{\theta\}$, when $A$ is a finitely generated quasi-strongly faithful $S$-act with unique zero element $\theta$ and $S$ is a monoid in which its
unique maximal right ideal $\mathfrak{M}$ is two-sided. Furthermore, as an application of Nakayama's lemma we prove Krull intersection theorem for $S$-acts. Finally, as a consequence, we shall see a homological classification form of this  lemma, i.e, we prove if $S$ is a commutative monoid then every projective $S$-act is free if and only if $E(S)=\{1\}$, which $E(S)$ is the set of all idempotents of $S$.
\end{abstract}
AMS {\it subject classification}: 20M30, 20M50. \\
{\it Keywords}: monoid; Nakayama's lemma; $S$-act; quasi-strongly faithful $S$-act.

\section{Introduction and Preliminaries}

Nakayama's Lemma was first discovered in the special case of ideals in a commutative ring by Wolfgang Krull and then in general by Goro
 Azumaya~\cite{GA}. The lemma is named after the Japanese mathematician Tadashi Nakayama and introduced in its present form in Nakayama~\cite{TN}.
 It is a significant tool in algebraic geometry, because it allows local data on algebraic varieties, in the form of modules over local rings,
to be studied pointwise as vector spaces over the residue field of the ring.

  There are several equivalent forms of Nakayama's Lemma in algebra. We express one here. Let $R$ be a ring with identity 1, and $A$ a finitely
  generated right $R$-module. If   $I$  is a right ideal of $R$ contained in the Jacobson radical of $R, J(R),$ and $AI=A$  then $A=0$.

  Some generalizations of Nakayama's Lemma  has been given and studied, in the literatures. For example, R. Ameri
in~\cite{AM}  generalized Nakayama's Lemma to a class of multiplication modules over commutative rings with
identity. Also, A. Azizi~\cite{AZ}  introduced Nakayama property for modules over a
 commutative ring with identity. He says that an  $R$-module $M$ has Nakayamaya property if $IM = M$,
where $I$ is an ideal of $R$, implies that there exists $a\in R$
such that $aM = 0$ and $a-1\in I.$ Then Nakayama's Lemma  states that every finitely generated $R$-module has Nakayama property.
He proved that $R$ is a perfect ring if and only if every $R$-module has Nakayama property.
Besides, we inform that there are generalizations in other contexts, we refer the reader to ~\cite{p.b} and \cite{ogus}.

Throughout this paper, $S$ is a monoid with at least a right non-invertible element, all $S$-acts will be right $S$-acts and all
ideals of $S$ are right ideals. The set of all idempotents of $S$ is denoted by $E(S)$. It is known that the set
$$\{ s\hspace{1mm}\mid \ s  \textrm{ is a right non-invertible element of }  S \}$$
  is the only maximal right ideal of $S$. In this note, we reserve  $\mathfrak{M}$ to denote, always, this unique maximal right ideal of $S$.
For more information on  $S$-acts we refer the reader to~\cite{KKM}.

 Let $S$ be a monoid with identity $1$. Recall that a (right) $S$-act is a nonempty set $A$ equipped with a map
$\mu: A\times S\to A$ called its action, such that, denoting $\mu(a,s)$ by $as$, we have $a1 = a$ and $a(st)=(as)t$, for all $a\in A$,
and $s, t\in S$. An element $\theta \in A$  is called a zero
of $A$ if $\theta s=\theta$ for any $s\in S$. Let $A$ be an $S$-act and $B\subseteq A$ a non-empty subset. Then
$B$ is called a subact of $A$ if $bs\in B$ for all $s\in S$ and $b\in B.$ In particular, if $I$ is a (proper) ideal of $S$, then $AI:= \{
as \mid a\in A, s\in I\}$ is a subact of $A.$ Any subact $B\subseteq
A$ defines the Rees congruence $\rho_B$ on A, by setting $a\rho_B a'$ if $a, a'\in B$ or $a = a'$. We denote the resulting factor act
by $A/B$ and call it the Rees factor act of $A$ by the subact $B$.
Clearly, $A/B$ has a zero which is the class consisting of $B$, all other
classes are one-element sets. Moreover, any subact $B\subseteq A$ gives rise to a kernel congruence $\ker \pi$ where $\pi : A\rightarrow A/B$
 is the canonical epimorphism. The category of all $S$-acts, with
action-preserving ($S$-act) maps ($f : A\to B$ with $f(as) = f(a)s$, for $s\in S$, $a\in A$) between them, is denoted by
$\textbf{Act}-S$. Clearly $S$ itself is an $S$-act with its operation as the action.

  Nakayama's Lemma for $R$-modules governs the interaction between the Jacobson radical of a ring and its
finitely generated modules. We aware that for a ring  $R$ with identity 1, $J(R)$ is a two-sided ideal of $R$, but for a monoid $S$,
$$\mathfrak{M}=\{s\in S \ \mid \ st\neq 1 \textrm{ for all }  t\in S\}$$ is the only maximal right ideal of $S$ and
then for any proper ideal $I$ of $S$ we have $I\subseteq \mathfrak{M}$. We can not talk about Jacobson
radical of a monoid, because $\mathfrak{M}$ is the only maximal ideal for it. We therefore  consider Nakayama's
Lemma in $\textbf{Act}-S$ where $S$ is a monoid with a unique two-sided maximal ideal $\mathfrak{M}$.
In~\cite[Example 3.18.10]{KKM}, the authors present a monoid $S$ in which $\mathfrak{M}$ is not a two-sided
ideal. Moreover, there are examples of monoids $S$ that are not commutative, but their maximal ideals are two-sided. For example, given
$S=(M_n(\mathbb{R}),\cdot )$, the monoid of all $n\times n$ matrices with real number entries  under usual multiplication of matrices.
Since $ab=1$ implies $ba=1$ for $a, b\in S$, the unique maximal ideal of $S$ is two-sided (see Lemma~\ref{two-sided equivalent
condition} of this paper). Besides, there are many examples of finitely
generated $S$-acts $A$ with zero element $\theta$ in which for a proper ideal $I$ of $S$, $AI=A$, but $A\neq\{\theta\};$ take any
monoid $S$ and an arbitrary finite set $A$ with $|A|>1$. Then $A$ becomes a right $S$-act by trivial action, i.e., $as=a$ for all $a\in A, s\in
S.$ Therefore, $AI=A$ for every proper ideal $I$, although, $A\neq\{\theta\}$. As another  example of this situation,  we
will provide  in Example~\ref{residue class group modulo} of next section,  a finitely generated $\mathbb{N}$-act with a unique zero for which $A(2\mathbb{N})=A$, but $A\neq\{\theta\}$.

In this paper, we are going to study some forms of Nakayama's Lemma on $\textbf{Act}-S$.
More precisely, in Theorem~\ref{nakayama 2} of next section, we show that equality $AI=A$ for some proper ideal $I$ of $S$ implies $A=\{\theta\}$,
when $A$ is a finitely generated quasi-strongly faithful $S$-act with unique zero element $\theta$ and $S$ is a monoid in which the
unique maximal right ideal $\mathfrak{M}$ is two-sided.  As an application of Nakayama's Lemma we prove Krull intersection
theorem for $S$-acts. As a consequence, we shall see a homological classification form of this  lemma, i.e, we prove if $S$ is a
commutative monoid then, every projective $S$-act is free if and only if $E(S)=\{1\}$, which $E(S)$ is the set of all idempotents of $S$.

\section{Some forms of Nakayama's lemma}

In this section we will prove  that a version of Nakayama's Lemma for $\textbf{Act}-S$.
We begin with the following lemma which determines monoids in which their unique maximal ideals are two-sided.
\begin{lemma}\label{two-sided equivalent condition}
Let $S$ be a monoid. Then the following statements are equivalent:
\begin{enumerate}
    \item $\mathfrak{M}$ is a two-sided ideal of $S$.
    \item $st=1$ implies $ts=1$, for all $s,t\in S$.
\end{enumerate}
\begin{proof}
$(1)\Rightarrow (2).$ Suppose  $\mathfrak{M}$ is two-sided and
$st=1$ and $ts\neq 1$ for some $s,t\in S$. Then $tl\neq 1$ for all
$l\in S.$ Otherwise, $tl=1$ for some $l\in S$ implies that
$(st)l=s$, and so $l=s$. Hence $ts=1$ which is a contradiction. Then
$tl\neq 1$ for all $l\in S.$ Now according to the definition of $\mathfrak{M},
t\in\mathfrak{M}.$ But $\mathfrak{M}$ is a two-sided ideal and
$st=1,$  so $1=st\in\mathfrak{M}$ which is a contradiction. Thus
$st=1$ implies that $ts=1$.\\
$(2)\Rightarrow (1)$. Suppose $t\in\mathfrak{M}$ and $s\in S$ are
such that $st\notin\mathfrak{M}.$ Then, $s(ty)=(st)y=1$ for
some $y\in S$. By assumption we have $t(ys)=(ty)s=1$. Therefore,
 $t\notin\mathfrak{M},$ which is  a contradiction.
\end{proof}
\end{lemma}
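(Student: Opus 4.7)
The plan is to work directly from the explicit description $\mathfrak{M}=\{s\in S\mid st\ne 1\text{ for all }t\in S\}$, and to translate each side of the equivalence into a statement about right-invertibility.

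For the direction $(1)\Rightarrow(2)$, I would argue by contradiction. Suppose $\mathfrak{M}$ is two-sided and that there exist $s,t\in S$ with $st=1$ but $ts\ne 1$. The first sub-goal is to show $t\in\mathfrak{M}$, i.e.\ that $t$ has no right inverse at all. If $tl=1$ for some $l\in S$, then cancelling via $st=1$ gives $s=s(tl)=(st)l=l$, so $tl=ts=1$, contradicting $ts\ne 1$. Hence $t\in\mathfrak{M}$. Now use that $\mathfrak{M}$ is assumed to be a left ideal as well: multiplying $t$ on the left by $s$ must keep us in $\mathfrak{M}$, giving $1=st\in\mathfrak{M}$, which is absurd since $1$ is right-invertible.

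For the direction $(2)\Rightarrow(1)$, I already know $\mathfrak{M}$ is a right ideal, so only the left ideal property needs to be checked. Take $t\in\mathfrak{M}$ and $s\in S$ and suppose, for contradiction, that $st\notin\mathfrak{M}$. Then $(st)y=1$ for some $y\in S$, which by associativity reads $s(ty)=1$. Now invoke hypothesis (2) on the pair $(s,ty)$: from $s(ty)=1$ we conclude $(ty)s=1$, i.e.\ $t(ys)=1$. But this means $t$ has a right inverse, contradicting $t\in\mathfrak{M}$. Hence $st\in\mathfrak{M}$ as required.

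I do not expect any serious obstacle: the proof is essentially a careful unwinding of the definition of $\mathfrak{M}$ together with associativity. The only point that needs a little care is the intermediate claim in $(1)\Rightarrow(2)$ that $t$ lies in $\mathfrak{M}$ (as opposed to merely failing to be a two-sided inverse of $s$); this is what forces the contradiction with two-sidedness to occur at $1=st$ rather than at some less useful element.
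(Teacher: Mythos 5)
Your proposal is correct and follows essentially the same argument as the paper: in $(1)\Rightarrow(2)$ you first show $t$ has no right inverse (hence $t\in\mathfrak{M}$) and then derive the contradiction $1=st\in\mathfrak{M}$, and in $(2)\Rightarrow(1)$ you check the left-ideal property by the same associativity manipulation $t(ys)=(ty)s=1$. No gaps.
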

The following remark is a simple consequence of Lemma~\ref{two-sided equivalent condition}.
\begin{remark} Recall from~\cite{KKM} that  monoids $S$ and
$T$ are  Morita equivalent  if $\textbf{Act}-S$ and $\textbf{Act}-T$
are equivalent categories. In this case, we write $S\sim_M T$. In a
certain sense Morita equivalence is the global aspect of homological
classification of monoids since it contains the question to which
extent a monoid $S$ is determined by the entire category
$\textbf{Act}-S$. For monoids $S$ and $T,$ in view of~\cite[Corollary
5.3.14]{KKM} and Lemma~\ref{two-sided equivalent condition} we
obtain that, $S\sim_M T$ if and only if $S\cong T$, whenever the unique maximal ideals of $S$ and $T$ are two-sided.
\end{remark}
The next lemma easily shows that every finitely generated $S$-act with a unique zero element has a maximal subact.
 \begin{lemma}\label{maximal subact}
Let $S$ be a monoid and let $A$ be a finitely generated $S$-act with
 a unique zero element $\theta$ and $A\neq\{\theta\}$. Then every proper
subact of $A$ is contained in a maximal subact. In particular, $A$
has a maximal subact.
\end{lemma}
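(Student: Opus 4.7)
The plan is to apply Zorn's Lemma to the poset of proper subacts of $A$ containing a fixed proper subact $B$, ordered by inclusion. The existence of at least one such proper subact will come for free in the ``in particular'' statement from the hypothesis that $\theta$ is a zero of $A$ and $A\neq\{\theta\}$, which makes $\{\theta\}$ a proper subact.

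First, I would fix generators $a_1,\dots,a_n\in A$ with $A=a_1S\cup\cdots\cup a_nS$ (using finite generation) and let $B$ be any proper subact. Set $\mathcal{F}=\{C\subsetneq A:C\text{ is a subact and }B\subseteq C\}$, partially ordered by inclusion. Note $\mathcal{F}$ is nonempty since $B\in\mathcal{F}$. The verification that $\mathcal{F}$ is inductive is the heart of the argument: given a chain $\{C_\lambda\}_{\lambda\in\Lambda}$ in $\mathcal{F}$, the union $C=\bigcup_\lambda C_\lambda$ is closed under the action (since each $C_\lambda$ is), so it is a subact containing $B$; the only nontrivial point is properness.

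Here is where finite generation enters, which I expect to be the main (and really only) obstacle. If one had $C=A$, then each generator $a_i$ would lie in some $C_{\lambda_i}$. Since $\Lambda$ is totally ordered and there are only finitely many indices $\lambda_1,\dots,\lambda_n$, one of the $C_{\lambda_i}$, say $C_{\lambda_0}$, contains all the others and hence all generators $a_1,\dots,a_n$. But then $C_{\lambda_0}\supseteq a_1S\cup\cdots\cup a_nS=A$, contradicting $C_{\lambda_0}\in\mathcal{F}$. Hence $C\subsetneq A$, so $C\in\mathcal{F}$ is an upper bound for the chain.

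By Zorn's Lemma, $\mathcal{F}$ has a maximal element, which is by construction a maximal (proper) subact of $A$ containing $B$. For the ``in particular'' assertion, since $\theta$ is a zero, $\{\theta\}$ is a subact, and since $A\neq\{\theta\}$ it is proper; applying the first part with $B=\{\theta\}$ yields a maximal subact of $A$.
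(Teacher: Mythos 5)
Your proof is correct and follows the same route as the paper: apply Zorn's Lemma to the poset of proper subacts containing $B$, with finite generation guaranteeing that the union of a chain of proper subacts stays proper. The paper states this only as ``a straightforward application of Zorn's lemma,'' so your write-up simply supplies the details it leaves implicit.
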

\begin{proof}
Given $B$  a proper subact of $A$. Let  $\sum$ be the set of all
proper subacts of $A$ that contain $B.$ Then $\sum$ is a partially ordered set with set inclusion order.
Now, a straightforward application of  Zorn's lemma gives us the result.
\end{proof}
In the sequel we will present two versions of Nakayama's lemma.
\begin{theorem}({\bf Nakayama's Lemma, general case})\label{nakayama 1}
Let S be a monoid in which its unique   maximal right ideal
$\mathfrak{M}$ is two-sided. Moreover, let $A$  be an $S$-act and $B$  a
maximal subact of $A$ in which there exists $a\in A\setminus B$ such
that $\mathfrak{M} = \{s\in S \ \mid \ as\in B\}$. Then for any
proper ideal $I$ of $S$ we have $AI\neq A.$
\begin{proof}
On the contrary, let $I$ be a proper ideal of $S$ such that
$AI=A$. Then there are elements $c\in A$ and $s\in I$ such that
$a=cs$. It is clear that $c\neq a$, because if $c=a$ then  $a=as$
and  since $s\in I\subseteq \mathfrak{M}$ we get $a\in B$ which is a
contradiction. Since $a\notin B$ we have $c\notin B$ and since $B$
is a maximal subact of $A$ we have $A=B\cup aS$. Then, $c\in aS$,
and so $c=at$ for some  $t\in S.$ Since $s\in \mathfrak{M}$ and
$\mathfrak{M}$ is a  two-sided ideal of $S$ we have $ts\in
\mathfrak{M}.$ On the other hand,  $a=cs=ats\in B$ and this is a
contradiction. Therefore, for any proper ideal $I$ of $S$ we have $AI\neq A$.
\end{proof}
\end{theorem}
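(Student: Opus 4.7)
The plan is to argue by contradiction. Suppose $AI=A$ for some proper ideal $I$ of $S$; since $\mathfrak{M}$ is the unique maximal right ideal, $I\subseteq\mathfrak{M}$. From $a\in A=AI$ I would pick a decomposition $a=cs$ with $c\in A$ and $s\in I$, and then try to force the contradiction $a\in B$.

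First I would dispose of the trivial cases on the witness $c$. If $c=a$, then $a=as$ with $s\in I\subseteq\mathfrak{M}$, and the hypothesis $\mathfrak{M}=\{u\in S\mid au\in B\}$ gives $a=as\in B$, contradicting $a\notin B$. Likewise $c\notin B$, for otherwise $a=cs\in B$ since $B$ is a subact. So $c\in A\setminus(B\cup\{a\})$. Now I would use the maximality of $B$: the set $B\cup aS$ is a subact of $A$ (it is closed under the $S$-action because $B$ and $aS$ are), and it strictly contains $B$ since $a\in aS\setminus B$. Maximality forces $B\cup aS=A$, and because $c\notin B$ we conclude $c\in aS$, i.e., $c=at$ for some $t\in S$.

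The final step is where I would invoke the two-sidedness of $\mathfrak{M}$: since $s\in\mathfrak{M}$ and $t\in S$, we get $ts\in\mathfrak{M}$. Applying the defining property of $a$ to the element $ts\in\mathfrak{M}$ yields $a(ts)\in B$. But $a(ts)=(at)s=cs=a$, so $a\in B$, contradicting $a\in A\setminus B$. Hence $AI\neq A$.

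The only real obstacle is recognizing that the hypothesis on $\mathfrak{M}$ being two-sided is used in exactly one place, namely to pass from $s\in\mathfrak{M}$ to $ts\in\mathfrak{M}$; without it, the product $ts$ could fall outside $\mathfrak{M}$ and the membership $a(ts)\in B$ would no longer follow. Everything else is a routine combination of the maximality of $B$ with the characterization of $\mathfrak{M}$ via the element $a$.
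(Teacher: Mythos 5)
Your proposal is correct and follows essentially the same argument as the paper's own proof: decompose $a=cs$ with $s\in I\subseteq\mathfrak{M}$, rule out $c=a$ and $c\in B$, use maximality of $B$ to write $A=B\cup aS$ and hence $c=at$, and then apply two-sidedness of $\mathfrak{M}$ to get $ts\in\mathfrak{M}$ and the contradiction $a=a(ts)\in B$. Your write-up even makes explicit a couple of details (why $B\cup aS$ is a subact, where two-sidedness is genuinely needed) that the paper leaves implicit.
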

\begin{example}\label{first demonstration}
Let $(\mathbb{N},\cdot )$ be the monoid of natural numbers with
the usual multiplication. Then  it is an $\mathbb{N}$-act,
$2\mathbb{N}$ is a subact of $\mathbb{N}$ and it satisfies in the
assumptions of Theorem~\ref{nakayama 1}. So for each proper  ideal  $I$ of
$\mathbb{N}$ we have $(2\mathbb{N})I\neq 2\mathbb{N}$.
\end{example}
\begin{example}\label{second demonstration}
Let $S=(\mathbb{N},\cdot )$ be the monoid of natural numbers with
the usual  multiplication. Then, again $A=S\backslash\{1\}$ with
the usual multiplication of natural numbers as its operation is an $S$-act.
Let $U$ be the set of all prime numbers. Then $U$ is a set of
generating elements of $A$. Note that $U$ is the least generating set of $A$, i.e. $A$ is not finitely generated. The set
$B=S\backslash\{1,2\}$ is a maximal subact of $A$ in which $\mathfrak{M}=\{s\in S \mid 2s\in B\}.$ Then all  conditions of
Theorem~\ref{nakayama 1} hold for  $A$. Therefore for any proper ideal $I$ of $S$, $AI\neq A$.
\end{example}
In the next lemma we will see that the second condition of
Theorem~\ref{nakayama 1} is equivalence to the implication that $as=a$ implies $s\notin\mathfrak{M}$. More precisely:
\begin{lemma}\label{two equivalent condition}
Let $A$ be an $S$-act with a maximal subact $B$. For any $a\in A\setminus B$ the following statements are equivalent:
\begin{enumerate}
\item $\mathfrak{M}=\{s\in S \mid as\in B\}$.
\item $as=a$ implies $s\notin\mathfrak{M}$.
\end{enumerate}
\end{lemma}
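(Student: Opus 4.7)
The plan is to set $T:=\{s\in S\mid as\in B\}$ and observe that this is a right ideal of $S$, since $B$ being a subact means $as\in B$ forces $(as)t=a(st)\in B$ for every $t\in S$. Moreover $T$ is proper because $1\notin T$ (we have $a=a\cdot 1\notin B$), and hence by maximality of $\mathfrak{M}$ among right ideals the inclusion $T\subseteq\mathfrak{M}$ is automatic. So the content of (1) is really the reverse inclusion $\mathfrak{M}\subseteq T$, and the lemma reduces to showing that this reverse inclusion is equivalent to (2).

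The direction (1)$\Rightarrow$(2) is immediate and I would dispatch it first: if $as=a$ and $s\in\mathfrak{M}$, then (1) gives $a=as\in B$, contradicting $a\in A\setminus B$; hence $s\notin\mathfrak{M}$.

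The substantive direction is (2)$\Rightarrow$(1). I would argue by contradiction, assuming that some $s\in\mathfrak{M}$ satisfies $as\notin B$. The key move is to invoke maximality of $B$ twice. First, since $as\notin B$, the subact $B\cup asS$ strictly contains $B$, so by maximality $B\cup asS=A$. As $a\notin B$, this forces $a\in asS$, i.e.\ $a=ast$ for some $t\in S$. Now apply hypothesis (2) to $st$: from $a(st)=a$ we conclude $st\notin\mathfrak{M}$. But $\mathfrak{M}$ is a right ideal of $S$ (if $su=1$ were possible for some $u\in S$, then $s$ would be right-invertible, contradicting $s\in\mathfrak{M}$; a standard check shows right ideals of $S$ are closed under right multiplication), so $s\in\mathfrak{M}$ implies $st\in\mathfrak{M}$, a contradiction. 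This yields $as\in B$ for every $s\in\mathfrak{M}$, i.e.\ $\mathfrak{M}\subseteq T$, completing the proof.

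The main (mild) obstacle is the (2)$\Rightarrow$(1) step, specifically the idea of replacing $a$ by $as$ to apply maximality a second time and extract the element $t$ with $a=ast$; once that is in hand, the closure of $\mathfrak{M}$ under right multiplication does the rest. No calculations or special properties of $S$ beyond the existence of a unique maximal right ideal are needed (note in particular that two-sidedness of $\mathfrak{M}$ is \emph{not} required for this equivalence, although it was used in Theorem~\ref{nakayama 1}).
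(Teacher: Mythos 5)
Your proof is correct, and in a form close to but not identical with the paper's. The framing (reduce (1) to the single inclusion $\mathfrak{M}\subseteq T$ where $T=\{s\in S\mid as\in B\}$, since $T$ is a proper right ideal and hence automatically lies in $\mathfrak{M}$) and the direction (1)$\Rightarrow$(2) match the paper exactly. For (2)$\Rightarrow$(1) the paper argues directly rather than by contradiction: hypothesis (2) gives $a\notin a\mathfrak{M}$, so $B\cup a\mathfrak{M}$ is a \emph{proper} subact containing $B$ (using that $a\mathfrak{M}$ is a subact because $\mathfrak{M}$ is a right ideal), and maximality of $B$ forces $B\cup a\mathfrak{M}=B$, i.e.\ $a\mathfrak{M}\subseteq B$, which is precisely $\mathfrak{M}\subseteq T$. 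You instead fix a hypothetical $s\in\mathfrak{M}$ with $as\notin B$, apply maximality in the opposite direction to get $B\cup asS=A$, extract $t$ with $a=ast$, and contradict (2) via $st\in\mathfrak{M}$. Both arguments rest on the same two ingredients --- maximality of $B$ applied to a union of $B$ with an orbit, and closure of $\mathfrak{M}$ under right multiplication --- but the paper's version handles all of $\mathfrak{M}$ at once and avoids the contradiction, while yours is element-wise and structurally parallels the proof of Theorem~\ref{nakayama 1}. Your closing observation that two-sidedness of $\mathfrak{M}$ is not needed here is consistent with the paper, whose statement and proof of this lemma make no such assumption.
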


\begin{proof}
$(1)\Rightarrow (2).$ If $as=a$ and $s\in\mathfrak{M}$, then $a\in B$ which is impossible.\\
$(2)\Rightarrow (1)$. Since $a\notin B,$ the set $ \{s\in S  \mid  as\in B\}$
is a proper ideal of $S$. On the other hand, by (2)
 we have $a\notin a\mathfrak{M}$, then  $B\subseteq B \cup a\mathfrak{M}\subsetneqq A$.
 As $B$ is a maximal subact  of $A$, $B=B\cup a\mathfrak{M}$. This means that $a\mathfrak{M}\subseteq B$, or
$\mathfrak{M}\subseteq\{s\in S
 \mid  as\in B\}\subsetneqq S$, but $\mathfrak{M}$ is the  maximal ideal of $S$,
therefore $\mathfrak{M}=\{s\in S  \mid  as\in B\}$.
\end{proof}
If $A$ is a right $R$-module on local ring $R$ then
$J(R)=\mathfrak{M}$ is the only maximal right ideal of $R$. In this case we have, $a=am$ if and only if $a=0$, for all
$m\in\mathfrak{M} $ and $a\in A$. Therefore the following corollary is a version of Nakayama's lemma for $S$-acts.

\begin{corollary}\label{nakayama 1.5}
Let S be a monoid for which  the  maximal ideal $\mathfrak{M} $ is
two-sided and $A$ is an $S$-act with a maximal subact $B$. Let
$AI=A$, for some proper ideal $I$ of $S$. Then for every $a\in
A\backslash B$ there exists $m\in\mathfrak{M} $ such that $am=a$.
\end{corollary}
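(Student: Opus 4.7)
The plan is to argue by contradiction, combining Lemma~\ref{two equivalent condition} with Theorem~\ref{nakayama 1}; no new machinery should be required, since the corollary is essentially a repackaging of those two results. Suppose the conclusion fails for some specific $a\in A\setminus B$, i.e.\ $am\neq a$ for every $m\in\mathfrak{M}$. Then whenever $as=a$ holds in $A$ we must have $s\notin\mathfrak{M}$, and this is exactly condition (2) of Lemma~\ref{two equivalent condition} for this element $a$.

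Next, I would invoke the equivalence proved in Lemma~\ref{two equivalent condition} to pass from (2) to (1): it yields $\mathfrak{M} = \{s\in S \mid as\in B\}$. At this point the hypotheses of Theorem~\ref{nakayama 1} are all in place, namely $S$ is a monoid whose unique maximal right ideal $\mathfrak{M}$ is two-sided, $A$ is an $S$-act, $B$ is a maximal subact, and the element $a\in A\setminus B$ satisfies $\mathfrak{M}=\{s\in S\mid as\in B\}$.

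Applying Theorem~\ref{nakayama 1} then gives $AI\neq A$ for every proper ideal $I$ of $S$, which directly contradicts the standing hypothesis $AI=A$. Therefore the initial supposition is untenable, and for every $a\in A\setminus B$ there exists $m\in\mathfrak{M}$ with $am=a$, as required.

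I do not anticipate any real obstacle: once one recognises that the negation of the desired conclusion for a fixed $a$ is literally condition (2) of Lemma~\ref{two equivalent condition}, the chain ``(2)$\Rightarrow$(1)$\Rightarrow$Theorem~\ref{nakayama 1}$\Rightarrow$contradiction'' is immediate. The only mild subtlety worth stating carefully is that Lemma~\ref{two equivalent condition} is applied to one particular element $a\in A\setminus B$ rather than uniformly, which matches the pointwise character of the conclusion.
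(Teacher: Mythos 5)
Your proof is correct and follows exactly the paper's own argument: negate the conclusion for a fixed $a$, recognize this as condition (2) of Lemma~\ref{two equivalent condition}, pass to condition (1), and apply Theorem~\ref{nakayama 1} to contradict $AI=A$. No differences worth noting.
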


\begin{proof}
If for some $a\in A\backslash B,$  $am\neq a$, for every
$m\in\mathfrak{M} ,$ then by Lemma~\ref{two equivalent condition},
$\mathfrak{M} =\{s\in S  \mid  as\in B\}.$  By Theorem~\ref{nakayama 1},
$AI\neq A$, which  contradicts to the assumption. The proof  is now
complete.
\end{proof}
\begin{example}\label{residue class group modulo}
Let $A=\mathbb{Z}_r=\{[0],[1],\cdots ,[r-1]\}$ be the additive group
of integers modulo $r$ where $r>1$ and let $(\mathbb{N},\cdot )$ be
the monoid of natural numbers with the usual multiplication. Then, $A$
is an $\mathbb{N}$-act by the action $[t]n=[tn]$. By definition, $A$
is a finitely generated $\mathbb{N}$-act with a unique zero.
However,  for the ideal $I=2\mathbb{N}$ of $\mathbb{N}$ we have
$AI=A$ if $A=\mathbb{Z}_3$. Indeed, $B=\{[0]\}$ is a maximal subact
of $\mathbb{Z}_3$, according to Corollary~\ref{nakayama 1.5} for any
$a\in \mathbb{Z}_3$ there exists $m\in\mathfrak{M} ,$ such that
$am=a$.
\end{example}

\begin{definition}
We call an $S$-act $A$  quasi-strongly faithful if for $s\in S$ the
equality $as=a$ for some element $a\in A$  implies that
$s\notin\mathfrak{M} $. (In case $A$ has a unique
 zero element $\theta$ we assume that, $a\neq\theta$.)
\end{definition}

\begin{theorem}({\bf Nakayama's Lemma, particular case})\label{nakayama 2}
Let S be a monoid in which its unique maximal right ideal
$\mathfrak{M} $ is two-sided. Let $A$ be a finitely generated
quasi-strongly faithful $S$-act with a unique zero element $\theta$.
If $AI=A$ for some proper ideal $I$ of $S$, then $A=\{\theta\}$.
\end{theorem}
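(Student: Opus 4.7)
The plan is to argue by contradiction, assuming $A \neq \{\theta\}$ and combining the three preceding results (Lemma~\ref{maximal subact}, Corollary~\ref{nakayama 1.5}, and the definition of quasi-strongly faithful) to produce a contradiction. All the heavy lifting has already been done in the earlier lemmas; this theorem is essentially their assembly, with the quasi-strongly faithful hypothesis supplying the final punch that rules out the ``exceptional'' behaviour exhibited in Example~\ref{residue class group modulo}.

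First I would observe that since $A$ has unique zero $\theta$ and $A\neq\{\theta\}$, the singleton $\{\theta\}$ is a proper subact of $A$. Because $A$ is finitely generated, Lemma~\ref{maximal subact} applies and guarantees the existence of a maximal subact $B$ of $A$ containing $\{\theta\}$. In particular $B$ is proper, so I may pick some $a\in A\setminus B$; since $\theta\in B$, this element satisfies $a\neq\theta$.

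Next I would apply Corollary~\ref{nakayama 1.5} to the pair $(A,B)$: the hypothesis $AI=A$ for a proper ideal $I\subseteq \mathfrak{M}$, together with the two-sidedness of $\mathfrak{M}$, yields an element $m\in\mathfrak{M}$ with $am=a$. But by the definition of quasi-strongly faithful, the equality $am=a$ with $a\neq\theta$ forces $m\notin\mathfrak{M}$, contradicting $m\in\mathfrak{M}$. Therefore the initial assumption $A\neq\{\theta\}$ was untenable, and $A=\{\theta\}$ as claimed.

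The only point that requires any care is verifying that the hypotheses of the tools being invoked are really in force: Lemma~\ref{maximal subact} needs finite generation of $A$ plus a unique zero element, both of which are in the statement; Corollary~\ref{nakayama 1.5} needs $\mathfrak{M}$ two-sided and a maximal subact $B$, both already secured; and the quasi-strongly faithful definition requires $a\neq\theta$, which is why selecting $a$ from $A\setminus B$ (with $\theta\in B$) rather than arbitrarily from $A$ is the correct move. There is no genuine obstacle here, since the substantive work was distributed across the earlier results.
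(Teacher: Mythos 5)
Your proposal is correct and follows essentially the same route as the paper's own proof: assume $A\neq\{\theta\}$, produce a maximal subact via Lemma~\ref{maximal subact}, extract $m\in\mathfrak{M}$ with $am=a$ from Corollary~\ref{nakayama 1.5}, and contradict quasi-strong faithfulness. Your extra step of taking $B$ to contain $\{\theta\}$ so that the chosen $a$ satisfies $a\neq\theta$ is a small but genuine improvement in care over the paper's version, which leaves that point implicit.
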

\begin{proof}
Assume that $A\neq\{\theta\}.$ Then  by Lemma~\ref{maximal
subact}, $A$ contains a maximal subact $B$. Take $a\in A\setminus
B.$ By Corollary~\ref{nakayama 1.5}, $as=a$ for some
$s\in\mathfrak{M} $. This is a contradiction because $A$ is a
quasi-strongly faithful $S$-act. Thus $A=\{\theta\}$.
\end{proof}
\begin{corollary}\label{nakayama 3} Let S be a monoid in which its
unique maximal right ideal $\mathfrak{M} $ is two-sided. Let $A$ be a
finitely generated quasi-strongly faithful $S$-act. If $B\cup AI=A$
for some proper ideal $I$ of $S$ and some subact $B$ of $A$, then
$A=B$.
\end{corollary}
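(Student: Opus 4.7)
The plan is to reduce the problem to Theorem~\ref{nakayama 2} by passing to the Rees factor act $C := A/B$. This quotient has a distinguished zero $[B]$ and inherits finite generation from $A$, so once I verify that $C$ is quasi-strongly faithful with $[B]$ as its unique zero and that $CI = C$, Theorem~\ref{nakayama 2} will force $C = \{[B]\}$, which is precisely the statement $A = B$.

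The key observation for transferring quasi-strong faithfulness is that every class $[a] \in C$ with $a \notin B$ is a singleton in the Rees factorization; hence $[a]s = [a]$ forces $as = a$, and the quasi-strong faithfulness of $A$ then delivers $s \notin \mathfrak{M}$. The same singleton argument shows that no $[a]$ with $a \notin B$ can be an additional zero of $C$, because such an element would have to be fixed by every $s \in S$, including elements of $\mathfrak{M}$, contradicting the assumed property of $A$.

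For the equality $CI = C$, I would treat two cases. If $[a] \in C$ with $a \notin B$, then the hypothesis $B \cup AI = A$ supplies a decomposition $a = a's$ with $a'\in A$ and $s \in I$, so $[a] = [a']s \in CI$. For the zero $[B]$ itself, pick any $b \in B$ and any $s \in I$; since $B$ is a subact, $bs \in B$, so $[B] = [bs] = [b]s \in CI$. Applying Theorem~\ref{nakayama 2} to $C$ then concludes the proof.

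The main point of care is the hypothesis transfer to the Rees factor: once one exploits the singleton structure of the non-zero classes of $A/B$, quasi-strong faithfulness and uniqueness of the zero both reduce to the corresponding facts for $A$, and the remaining verification that $CI = C$ is essentially bookkeeping.
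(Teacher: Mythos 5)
Your proposal follows exactly the paper's route: pass to the Rees factor $A/B$, check that Theorem~\ref{nakayama 2} applies to it, and conclude $A/B=\{\theta\}$, i.e.\ $A=B$. You make explicit the verifications the paper leaves implicit --- finite generation of $A/B$, the equality $(A/B)I=A/B$ (including the zero class), and the transfer of quasi-strong faithfulness via the singleton structure of the non-$B$ Rees classes --- and those computations are correct as far as they go.

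However, one of your explicit claims fails in an edge case, and it is precisely the point the paper also glosses over. You assert that no class $[a]$ with $a\notin B$ can be a second zero of $C=A/B$ ``because such an element would have to be fixed by every $s\in S$, including elements of $\mathfrak{M}$, contradicting the assumed property of $A$.'' If $A$ happens to possess a unique zero $\theta$ with $\theta\notin B$, then $[\theta]$ is indeed fixed by every $s\in S$, but this does \emph{not} contradict quasi-strong faithfulness of $A$: the definition explicitly exempts $a=\theta$. In that situation $A/B$ has two zeros and is not quasi-strongly faithful, so Theorem~\ref{nakayama 2} cannot be invoked. In fact the corollary as stated fails there: take $S=(\mathbb{N},\cdot)$, let $A=\mathbb{N}\,\dot{\cup}\,\{\theta\}$ carry the multiplicative action on $\mathbb{N}$ with $\theta$ a zero, and put $B=\mathbb{N}$, $I=2\mathbb{N}$. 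Then $A$ is generated by $\{1,\theta\}$, is quasi-strongly faithful (for $n\in\mathbb{N}$, $ns=n$ forces $s=1\notin\mathfrak{M}$), and $B\cup AI=\mathbb{N}\cup(2\mathbb{N}\cup\{\theta\})=A$, yet $A\neq B$. So both your argument and the paper's (which simply asserts that $A/B$ ``always'' has a unique zero and inherits quasi-strong faithfulness) need an additional hypothesis --- that $\theta\in B$ whenever $A$ has a zero, or that $A$ has no zero at all. Under that hypothesis your proof is complete and matches the paper's.
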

\begin{proof}
Let $X=\{x_1,x_2,\cdots,x_n\}$ is a generating set for $A$. Then the set $\{\pi (x_i)\mid 1\leq i\leq n\}$ is a generating set for the
$S$-act $A/B$ where $\pi : A\rightarrow A/B$ is the canonical epimorphism. Since $B\cup AI=A$ we readily obtain $(A/B)I=A/B.$ On the
other hand, since $A$ is finitely generated and quasi-strongly faithful,  $A/B$ is too. Note that  $A/B$ always is an
 $S$-act with a unique zero element therefore by Theorem~\ref{nakayama 2}, we get $A/B=\{\theta\}$. That is,   $A=B$.
\end{proof}

\section{Application}

In this section we provide some applications of Nakayama's Lemma for $S$-acts.
\begin{proposition}\label{idempotent ideal}
Let S be a monoid in which its unique maximal right ideal $\mathfrak{M} $ is two-sided and
$A$ be an $S$-act which satisfies in the conditions of Theorem~\ref{nakayama 1}. Then:
\begin{enumerate}
    \item For every proper ideal $I$ of $S$ with $I^2=I$ we have $AI\ncong A$.
    \item For every ideal $I$ of $S$, $AI=A$ if and only if $I=S$.
    \item For every ideal $I$ of $S$ with $I^2=I$, $AI\cong A$ if and only if $I=S$.
\end{enumerate}
\begin{proof}
(1) In the contrary,  assume that there exists  a proper ideal $I$ of $S$, such that $I^2=I$ and
 $AI\cong A$. Let  $f:AI\longrightarrow A$ be an isomorphism.
Then $f(AI)=A$. We have $AI = f(AI)I=f(AI^2)=f(AI) = A.$ This is a contradiction with Nakayama's Lemma.\\
(2) $I\neq S$ implies $AI\neq A$ by Nakayama's Lemma. Therefore $AI=A$ implies $I=S$. Since $a1=a$, for any $a\in A$ we have
$AS=A.$ Therefore, $I=S$ implies that $AI=A$.\\
(3) follows from (1).
\end{proof}
\end{proposition}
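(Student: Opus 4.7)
The plan is to treat the three parts in the order (2), (1), (3), but logically (1) is the heart of the matter and (3) is an immediate packaging of (1) and (2). The overarching idea is to reduce everything to an equality $AI=A$ and then invoke the already proved Nakayama's Lemma (Theorem~\ref{nakayama 1}), which forbids this when $I$ is proper.

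For part (2), one direction is trivial: if $I=S$ then $AS\supseteq\{a\cdot 1\mid a\in A\}=A$ and the reverse inclusion is automatic, so $AI=A$. For the other direction, if $I\neq S$ then $I$ is a proper ideal of $S$, and Theorem~\ref{nakayama 1} directly gives $AI\neq A$. No new work is needed.

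For part (1), I would argue by contradiction. Assume $f\colon AI\to A$ is an $S$-act isomorphism. The crucial algebraic observation is that $I^{2}=I$ propagates to $A$: for any $a\in A$ and any $s\in I=I^{2}$, write $s=s_{1}s_{2}$ with $s_{i}\in I$, so that $as=(as_{1})s_{2}\in(AI)I$; the reverse inclusion is automatic by associativity. Hence $AI=(AI)I$. Now apply $f$ and use that it is action-preserving and surjective:
\[
A \;=\; f(AI) \;=\; f\bigl((AI)I\bigr) \;=\; f(AI)\cdot I \;=\; A\cdot I \;=\; AI.
\]
This equality $AI=A$ with $I$ proper contradicts Theorem~\ref{nakayama 1}, so no such isomorphism exists. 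This step, turning an abstract isomorphism into the concrete set-equality $AI=A$ via the idempotence $I^{2}=I$, is the main (and only) obstacle; everything else is bookkeeping.

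For part (3), the forward direction is just part (1) contrapositively: if $AI\cong A$ and $I^{2}=I$, then $I$ cannot be proper, so $I=S$. The reverse direction is part (2): $I=S$ gives $AI=A$, hence $AI\cong A$ via the identity map. I do not expect any hidden subtlety here beyond ensuring that the hypotheses of Theorem~\ref{nakayama 1} (two-sidedness of $\mathfrak{M}$ and the existence of the prescribed pair $(B,a)$) are inherited intact from the assumptions of the proposition, which they are by hypothesis.
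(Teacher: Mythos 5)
Your proposal is correct and follows essentially the same route as the paper: in part (1) you turn the isomorphism $f\colon AI\to A$ into the set equality $AI=A$ by combining surjectivity ($f(AI)=A$), equivariance ($f(XI)=f(X)I$), and idempotence ($AI^{2}=AI$), and then invoke Theorem~\ref{nakayama 1}; parts (2) and (3) are handled exactly as in the paper. Your write-up is in fact slightly more explicit than the paper's one-line chain, but the underlying argument is identical.
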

Next, as an application of Nakayama's Lemma,  we characterize commutative monoids $S$ over which every projective $S$-act is free.
By~\cite[Theorem 3.17.8]{KKM}, an act $P\in\textbf{Act}-S$ is projective if and only if $P=\dot{\cup}_{i\in I}P_i$ where $P_i\cong
e_iS$ for idempotents $e_i\in S, i\in I$. By~\cite[Theorem 4.13.3]{KKM}, if $S$ is a commutative monoid then it is easy to see
that every projective $S$-act is free $S$-act if and only if $E(S)=\{1\}$.

In the next theorem we will prove this result by Nakayama's Lemma.
\begin{theorem}
Let $S$ be a commutative monoid. Then every projective $S$-act is free if and only if $E(S)=\{1\}$.
\end{theorem}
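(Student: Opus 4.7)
The plan is to reduce everything to the structural characterization of projective $S$-acts recalled by the authors just before the statement, namely~\cite[Theorem 3.17.8]{KKM}: a $P\in\textbf{Act}-S$ is projective if and only if $P\cong\dot{\cup}_{i\in I}e_iS$ for some family $\{e_i\}_{i\in I}\subseteq E(S)$. Both directions of the claimed equivalence then boil down to a question about the cyclic act $eS$ for an arbitrary $e\in E(S)$.

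For the implication ($\Leftarrow$), assume $E(S)=\{1\}$ and let $P$ be projective. In the decomposition above each $e_i$ must be $1$, so $e_iS=S$ and therefore $P\cong\dot{\cup}_{i\in I}S$, which is by definition a free $S$-act.

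For the implication ($\Rightarrow$), fix $e\in E(S)$; the goal is $e=1$. By the structure theorem the cyclic act $eS$ is projective, hence by hypothesis free. A free $S$-act with basis of cardinality at least two cannot be cyclic, so the basis has exactly one element and $eS\cong S$ as $S$-acts. Let $\phi\colon eS\to S$ denote such an isomorphism and put $u:=\phi(e)$, so that $\phi(es)=us$ for every $s\in S$. Surjectivity of $\phi$ forces $uS=S$, and in the commutative monoid $S$ this means $u$ is a unit. Using $e^2=e$ and that $\phi$ preserves the $S$-action,
$$ u=\phi(e)=\phi(e\cdot e)=\phi(e)\cdot e=ue, $$
and cancelling the unit $u$ gives $e=1$. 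Hence $E(S)=\{1\}$.

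The Nakayama flavour of the argument is concentrated in the implicit step ``$uS=S$ forces $u$ to be a unit'', which is exactly the free-act shadow of Nakayama's Lemma: for any free $F\in\textbf{Act}-S$ and proper ideal $I$ one has $FI\subsetneq F$. Equivalently, one can argue directly that if $e\neq 1$ is idempotent then $e\in\mathfrak{M}$ (since $eu=1$ would imply $e=e^2u=eu=1$ by Lemma~\ref{two-sided equivalent condition}), so $eS$ is a proper ideal satisfying $(eS)(eS)=eS$, and this contradicts freeness of the act $eS$. The main obstacle I anticipate is the structural step ``cyclic and free implies $\cong S$'': it rests on the uniqueness (up to cardinality) of a basis for a free $S$-act, a point that is standard but deserves explicit citation, after which the rest of the proof is essentially mechanical.
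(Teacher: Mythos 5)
Your proof is correct, and the overall skeleton coincides with the paper's: both arguments use the cited structure theorem for projective acts to reduce to a single idempotent $e$, observe that the free act $eS$ must be isomorphic to $S$ (you via cyclicity, the paper via indecomposability of cyclic acts, \cite[Proposition 1.5.8]{KKM} --- an interchangeable step), and then derive $e=1$. Where you genuinely diverge is in that last derivation. The paper, true to its stated aim of obtaining the result ``by Nakayama's Lemma,'' feeds the idempotent ideal $I=eS$ (which satisfies $I^2=I$ by commutativity) into part (3) of Proposition~\ref{idempotent ideal} applied to the act $A=S$, concluding $eS\cong S$ forces $eS=S$ and hence $e=1$. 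You instead unwind the isomorphism $\phi\colon eS\to S$ explicitly: $u=\phi(e)$ generates $S$, hence is a unit in the commutative monoid, and $u=\phi(e\cdot e)=ue$ cancels to $e=1$. This is a perfectly valid, more elementary route that never touches the Nakayama machinery of Sections 2--3; what it buys is self-containedness, and what it costs is precisely the point the authors wanted to illustrate, namely that the classification drops out of their Nakayama-type results. Your closing remark, reducing $e\neq 1$ to the statement that the proper idempotent ideal $eS$ cannot satisfy $S(eS)\cong S$, is in fact essentially the paper's own argument, so you have both proofs in hand. The only step you flag as needing care --- that a cyclic free act has a one-element basis --- is indeed covered by \cite[Theorem 1.5.13]{KKM} together with the observation that a disjoint union of two or more copies of $S$ admits no single generator, so there is no gap.
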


\begin{proof}
First, assume that every projective $S$-act is free and $e\in S$
is an idempotent. Since $S$ is a commutative monoid, its unique
maximal ideal is two-sided and hence $SeS=eS.$ On the other hand,
it is easy to check that  $eS$ is a projective $S$-act (see
also~\cite[Theorem 3.17.2]{KKM}), therefore it is  a free $S$-act.
In view of~\cite[Theorem 1.5.13]{KKM}, $eS\cong\dot{\cup}_{i\in
I}S_i$ where $S_i\cong S$ for any $i\in I.$ By~\cite[Proposition
1.5.8]{KKM}, $eS$ is an indecomposable $S$-act, hence $eS\cong S.$
Now by part (3) of Corollary~\ref{idempotent ideal}, we get, $eS =
SeS \cong S$ if and only if $eS=S$. Then there exists $s\in S$
such that $es=1$ and so $e = es = 1.$ Thus $E(S) = \{1\}$. The converse is clear.
\end{proof}

In the following, we show that if $S$ is a monoid in which its
unique maximal right ideal $\mathfrak{M} $ is two-sided, then every two minimal generating sets of any
finitely generated quasi-strongly faithful $S$-act have the same size.

\begin{lemma}\label{dimention1}
Let $S$ be a monoid with a unique zero element in which every nonzero element of $S$ is invertible and $A$ a finitely generated
$S$-act. If $X$ and $Y$ are any minimal generating sets for $A$ then $|X|=|Y|$.
\end{lemma}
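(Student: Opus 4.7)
My plan is to show that the cardinality of any minimal generating set of $A$ is a predetermined invariant of $A$ alone. First, I would observe that $G := S \setminus \{0\}$, where $0$ denotes the unique zero of $S$, forms a group under the monoid operation: if $g,h \in G$ satisfied $gh = 0$, then $h = g^{-1}(gh) = g^{-1} \cdot 0 = 0$, a contradiction. Partition $A$ into the subact $A_0$ of zeros (those $\theta \in A$ with $\theta s = \theta$ for every $s \in S$) and its complement $A_+$. For $a \in A_+$ and $g \in G$, the element $ag$ again lies in $A_+$, since otherwise $(ag)g^{-1} = a$ would itself be a zero. Moreover $a \cdot 0 \in A_0$ (because $0 \cdot s = 0$ for all $s$), and $a \cdot 0$ depends only on the $G$-orbit of $a$, as $(ag)\cdot 0 = a(g\cdot 0) = a \cdot 0$. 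Consequently $aS = aG \cup \{a \cdot 0\}$ for every $a \in A_+$.

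Let $\mathcal{O}$ denote the set of $G$-orbits in $A_+$, and set $Z_+ := \{a \cdot 0 \mid a \in A_+\} \subseteq A_0$; both depend only on $A$. Given any minimal generating set $X$, I would argue that $X \cap A_+$ is an exact transversal of $\mathcal{O}$. Indeed, every $a \in A_+$ arises as $a = xs$ for some $x \in X$ and $s \in S$; since $a \notin A_0$ we must have $x \in A_+$ and $s \in G$, so $a \in xG$. Hence $X \cap A_+$ meets every $G$-orbit, and minimality forbids two elements of $X$ from lying in the same orbit (one would then belong to the cyclic subact generated by the other). For $A_0$, an element $\theta$ is covered by $X$ precisely when either $\theta \in X \cap A_0$, or $\theta = x \cdot 0 \in Z_+$ for some $x \in X \cap A_+$. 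Combining covering with minimality forces $X \cap A_0 = A_0 \setminus Z_+$.

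Putting the two counts together yields $|X| = |\mathcal{O}| + |A_0 \setminus Z_+|$, a value depending only on $A$ and not on $X$. The same formula applies to $Y$, so $|X| = |Y|$, and finite generation of $A$ ensures that both cardinals are finite. The main subtlety is the bookkeeping between the monoid zero $0 \in S$ and the possibly many zeros comprising $A_0$, together with the verification that minimality pins down both the orbit transversal on $A_+$ and the relevant subset of $A_0$ precisely; once the decomposition $aS = aG \cup \{a \cdot 0\}$ is in hand, the rest of the argument is a straightforward accounting exercise.
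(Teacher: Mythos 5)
Your proof is correct, but it takes a genuinely different route from the paper's. The paper argues by contradiction and pigeonhole: assuming $|X|=n<m=|Y|$, it writes each $y_i=x_js$ with $s$ \emph{nonzero} (hence invertible), finds two $y$'s factoring through the same $x_j$, and concludes $y_{i_1}=y_{i_2}s_2^{-1}s_1$, contradicting minimality of $Y$. You instead compute the cardinality of \emph{any} minimal generating set as an intrinsic invariant of $A$, namely $|\mathcal{O}|+|A_0\setminus Z_+|$, by splitting $A$ into the fixed points $A_0$ and the $G$-free part $A_+$ (where $G=S\setminus\{0\}$ is a group) and showing a minimal generating set is exactly an orbit transversal of $A_+$ together with the elements of $A_0$ not reachable as $a\cdot 0$ from $A_+$. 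Your approach buys two things: it yields an explicit formula rather than just equality of cardinalities, and it carefully justifies the role of the zero $0\in S$ --- a point the paper glosses over, since its claim that each $y_i$ equals $x_js$ for some \emph{nonzero} $s$ is not immediate (a priori $y_i$ might only be expressible as $x_j\cdot 0$) and really requires the kind of $A_0$/$A_+$ analysis you carry out. The cost is more bookkeeping; the paper's argument is shorter where it is valid. Both proofs ultimately rest on the same core observation that elements related by an invertible scalar generate the same cyclic subact.
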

\begin{proof}
Assume that $X=\{x_1,\cdots,x_n\}$ and $Y=\{y_1,\cdots,y_m\}$ are minimal generating sets for $A$ and $n\neq m$. We may assume without
loss of generality that $n<m.$ For any $1\leq i\leq m,$ there exists $x_j\in X( 1\leq j\leq n),$ and  a nonzero element $s\in S$ such
that $y_i=x_js.$ Since $m>n,$  for distinct pair of elements $i_1,i_2\in\{1,2,\cdots,m\}$ there is exactly an index
$j\in\{1,2,\cdots,n\}$ such that  $y_{i_1}=x_js_1$ and $y_{i_2}=x_js_2$. Then we have $x_j=y_{i_1}s_1^{-1}=y_{i_2}s_2^{-1}$
and so  $y_{i_1}=y_{i_2}s_2^{-1}s_1$ that contradicts by the minimality of $Y$. Therefore $n=m$ and $|X|=|Y|$.
\end{proof}
\begin{theorem}\label{dimention2}
Let S be a monoid in which its unique maximal right ideal $\mathfrak{M} $ is two-sided. Let $A$ be a finitely generated
quasi-strongly faithful $S$-act,   $\pi : A\rightarrow A/A\mathfrak{M} $
 the canonical epimorphism and $a_1,\cdots,a_n\in A$. Then the following statements are equivalent:
\begin{enumerate}
    \item A is generated by $a_1,\cdots,a_n$;
    \item the $S$-act $A/A\mathfrak{M} $ is generated by
    $\pi(a_1),\cdots,\pi(a_n)$;
    \item the $S/\mathfrak{M} $-act $A/A\mathfrak{M} $ is generated by $\pi(a_1),\cdots,\pi(a_n)$.
\end{enumerate}
\end{theorem}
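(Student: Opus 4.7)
My plan is to establish the cycle of implications $(1)\Rightarrow(2)\Rightarrow(3)\Rightarrow(1)$, where the first two implications are essentially formal and the real content lies in $(3)\Rightarrow(1)$, which will be closed by an appeal to Corollary~\ref{nakayama 3}.

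For $(1)\Rightarrow(2)$ I would simply apply the surjective $S$-act homomorphism $\pi$: if $A=\bigcup_{i=1}^{n} a_iS$, then $A/A\mathfrak{M}=\bigcup_{i=1}^{n}\pi(a_i)S$, so $\pi(a_1),\dots,\pi(a_n)$ generate $A/A\mathfrak{M}$ as an $S$-act. For $(2)\Rightarrow(3)$, I first need to justify that the $S$-action on the Rees quotient $A/A\mathfrak{M}$ descends to a well-defined $S/\mathfrak{M}$-action. This is because for every $s\in\mathfrak{M}$ and $a\in A$ we have $as\in A\mathfrak{M}$, so $\pi(a)s$ is the zero of $A/A\mathfrak{M}$; consequently elements of $S$ that are Rees-congruent modulo $\mathfrak{M}$ act identically on $A/A\mathfrak{M}$. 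Once this is recorded, any $S$-generating set is automatically an $S/\mathfrak{M}$-generating set, simply by taking classes of the scalars.

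The core of the theorem is $(3)\Rightarrow(1)$. Set $B:=a_1S\cup\cdots\cup a_nS$, the subact of $A$ generated by $a_1,\dots,a_n$. My aim is to show $A=B\cup A\mathfrak{M}$ and then finish with Corollary~\ref{nakayama 3}, whose hypotheses are satisfied: $A$ is finitely generated and quasi-strongly faithful, $\mathfrak{M}$ is proper, and $B$ is a subact. Given $a\in A$, hypothesis (3) supplies an index $i$ and $\bar{s}\in S/\mathfrak{M}$ with $\pi(a)=\pi(a_i)\bar{s}$; choosing any representative $s\in S$ of $\bar{s}$, this becomes $\pi(a)=\pi(a_is)$ in the Rees quotient. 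If $a_is\in A\mathfrak{M}$ (in particular whenever $s\in\mathfrak{M}$), then $\pi(a)=\theta$, so $a\in A\mathfrak{M}$; otherwise the Rees congruence, whose only non-singleton class is $A\mathfrak{M}$ itself, forces $a=a_is\in B$. In either case $a\in B\cup A\mathfrak{M}$, and Corollary~\ref{nakayama 3} then gives $A=B$ as required.

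The main obstacle I anticipate is entirely book-keeping: one must be careful that the identity $\pi(a)=\pi(a_i)\bar{s}$ in $A/A\mathfrak{M}$ only lifts to $a=a_is$ in $A$ when neither side collapses to the distinguished zero of the Rees quotient, and the degenerate case has to be quarantined inside the $A\mathfrak{M}$ summand. Corollary~\ref{nakayama 3} is precisely what allows this residual $A\mathfrak{M}$ piece to be absorbed, turning a decomposition $A=B\cup A\mathfrak{M}$ into the desired equality $A=B$.
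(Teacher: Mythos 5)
Your proof is correct and follows essentially the same route as the paper: the decisive step in both is to show $A=B\cup A\mathfrak{M}$ for the subact $B$ generated by $a_1,\dots,a_n$ and then invoke Corollary~\ref{nakayama 3} with $I=\mathfrak{M}$, together with the observation that elements of $S\setminus\mathfrak{M}$ have singleton classes in $S/\mathfrak{M}$ so the $S$- and $S/\mathfrak{M}$-generation statements match up. The only cosmetic difference is that you arrange the implications as a cycle $(1)\Rightarrow(2)\Rightarrow(3)\Rightarrow(1)$ (and explicitly verify that the $S/\mathfrak{M}$-action on the Rees quotient is well defined), whereas the paper proves the two equivalences $(1)\Leftrightarrow(2)$ and $(2)\Leftrightarrow(3)$ separately.
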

\begin{proof}
(1)$\Rightarrow$ (2) is clear.\\
(2)$\Rightarrow$ (1) Let $B$ be the subact of $A$ generated by
$a_1,\cdots,a_n$. We show that $A=B\cup A\mathfrak{M} $. Assume
$a\in A$ then $\pi(a)\in A/A\mathfrak{M} $. By (2) we have
$\pi(a)=\pi(a_i)s$ for some $s\in S$ and $1\leq i\leq n$. If
$\pi(a)=A\mathfrak{M} $ then $a\in A\mathfrak{M} $ and otherwise
$a=a_is$ therefore $a\in B$. Then we have $A=B\cup A\mathfrak{M} $
therefore $A=B$ by Corollary~\ref{nakayama 3}.\\
(2)$\Rightarrow$ (3) is clear.\\
(3)$\Rightarrow$ (2) Assume $A\mathfrak{M} \neq \pi(a)\in
A/A\mathfrak{M} $ then $\pi(a)=\pi(a_i)[s]$ for some $[s]\in
S/\mathfrak{M} $ and $1\leq i\leq n$ by $(3)$. Since
$A\mathfrak{M} \neq \pi(a)$ therefore $s\notin \mathfrak{M} $ and
$[s]=\{s\}$. Then $\pi(a)=\pi(a_i)s$ which  implies $(2)$.
\end{proof}
\begin{corollary}
Let S be a monoid in which the unique maximal right ideal
$\mathfrak{M} $ is two-sided. Let $A$ be a finitely generated
quasi-strongly faithful $S$-act. If $X, Y$ are minimal generating
sets for $A$ then $|X|=|Y|$.
\end{corollary}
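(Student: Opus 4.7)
The plan is to push the question from $A$ down to the Rees factor $A/A\mathfrak{M}$ regarded as an act over $S/\mathfrak{M}$, where Lemma~\ref{dimention1} is directly applicable.

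First I would observe that, because $\mathfrak{M}$ is two-sided by hypothesis, $S/\mathfrak{M}$ carries a natural monoid structure in which the class of $\mathfrak{M}$ is a unique zero. Combining Lemma~\ref{two-sided equivalent condition} with the definition of $\mathfrak{M}$, every element of $S\setminus\mathfrak{M}$ is two-sided invertible in $S$; consequently every nonzero element of $S/\mathfrak{M}$ is invertible. Hence $S/\mathfrak{M}$ fits the hypotheses on the monoid required by Lemma~\ref{dimention1}.

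Next, I would let $X=\{x_1,\ldots,x_n\}$ and $Y=\{y_1,\ldots,y_m\}$ be two minimal generating sets of $A$, write $\pi\colon A\to A/A\mathfrak{M}$ for the canonical epimorphism, and argue that $\pi(X)$ and $\pi(Y)$ are both minimal generating sets of the $S/\mathfrak{M}$-act $A/A\mathfrak{M}$, of the same sizes $n$ and $m$ respectively. Generation is immediate from Theorem~\ref{dimention2}, and minimality in the image follows from the converse implication (3)$\Rightarrow$(1) of that theorem: deleting any $\pi(x_i)$ would, via Theorem~\ref{dimention2}, produce a proper subfamily of $X$ still generating $A$, contradicting the minimality of $X$. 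The less obvious point is that $\pi$ is injective on $X$, and this is where quasi-strong faithfulness enters. If $\pi(x_i)=\pi(x_j)$ with $i\ne j$, then (since $X$ consists of distinct elements) both $x_i$ and $x_j$ lie in $A\mathfrak{M}$; writing $x_i=am$ with $m\in\mathfrak{M}$ and $a=x_k t$ yields $x_i=x_k(tm)$ with $tm\in\mathfrak{M}$ by two-sidedness. Taking $k=i$ would force $x_i(tm)=x_i$ with $tm\in\mathfrak{M}$, violating quasi-strong faithfulness, while taking $k\ne i$ would allow $x_i$ to be dropped from $X$, contradicting minimality.

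Finally, invoking Lemma~\ref{dimention1} on the two minimal generating sets $\pi(X),\pi(Y)$ of the finitely generated $S/\mathfrak{M}$-act $A/A\mathfrak{M}$ would give $n=|\pi(X)|=|\pi(Y)|=m$, completing the proof. The main obstacle is the injectivity-on-$X$ step above, since it is the only place where the precise content of quasi-strong faithfulness and the two-sidedness of $\mathfrak{M}$ must be exploited beyond what Theorem~\ref{dimention2} already packages.
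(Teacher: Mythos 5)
Your proposal is correct and follows essentially the same route as the paper: pass to $A/A\mathfrak{M}$ as an $S/\mathfrak{M}$-act via Theorem~\ref{dimention2} and apply Lemma~\ref{dimention1}, the paper merely stating this in one line where you fill in the details. Your injectivity-on-$X$ step can also be obtained more directly: if $\pi(x_i)=\pi(x_j)$ for $i\neq j$, then the proper subset $X\setminus\{x_j\}$ already has image $\pi(X)$, which generates $A/A\mathfrak{M}$, so by Theorem~\ref{dimention2} it generates $A$, contradicting minimality of $X$.
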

\begin{proof}
Since $\mathfrak{M} $ is a two-sided ideal of $S$,
$S/\mathfrak{M} $ is a monoid in which every nonzero element is
invertible. Then the result follows by Lemma~\ref{dimention1}
and Theorem~\ref{dimention2}.
\end{proof}
Let $I$ be an ideal of a commutative Noetherian ring $R$ such that
$I\subseteq J(R)$. Then $\cap_{n\in\mathbb{N}}I^n=0$. This is known as Krull's Intersection Theorem (see~\cite[Corollary 8.25]{sharp}) in
the theory of modules over commutative rings. In the following we prove a counterpart  of this theorem for $S$-acts.

Recall~\cite{chain} that an $S$-act $A$ is  Noetherian if $A$ satisfies
the ascending chain condition on its subacts, that is, every ascending chain of subacts of $A$ is finite.

\begin{lemma}\label{noetherian}
Let S be a monoid and $A,B$ be two $S$-acts.
\begin{enumerate}
    \item $A\dot{\cup} B$ is Noetherian if and only if $A$ and $B$ themselves are;
    \item if $B$ is a subact of $A$, then $A$ is Noetherian if and only if $B$ and $A/B$ themselves are;
    \item if $S$ is a Noetherian monoid and $A$ is a finitely generated $S$-act then $A$ is Noetherian.
\end{enumerate}
\end{lemma}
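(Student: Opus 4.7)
The plan is to prove the three parts in the stated order, each building on the previous.

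For (1), the forward direction is immediate since any ascending chain of subacts of $A$ (or of $B$) is automatically such a chain in $A\dot{\cup} B$, so ACC descends. For the reverse direction, given a chain $D_1\subseteq D_2\subseteq\cdots$ of subacts of $A\dot{\cup} B$, I would intersect term-by-term with the subacts $A$ and $B$ to obtain ascending chains $\{D_n\cap A\}$ and $\{D_n\cap B\}$ (each is either constantly empty or becomes a chain of subacts from some index on, since preimages of subacts under the inclusions $A,B\hookrightarrow A\dot{\cup} B$ are subacts). By hypothesis both chains stabilize, and since $D_n=(D_n\cap A)\dot{\cup}(D_n\cap B)$, the original chain stabilizes as well.

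For (2), the forward direction uses the canonical epimorphism $\pi:A\to A/B$: a chain of subacts of $B$ is already a chain in $A$, and a chain $\{D_n\}$ of subacts of $A/B$ pulls back via $\pi^{-1}$ to an ascending chain of subacts in $A$, whose stabilization forces stabilization of $\{D_n\}$ by surjectivity of $\pi$. For the harder reverse direction, I would take an ascending chain $\{C_n\}$ in $A$ and consider the two associated chains $\{C_n\cap B\}$ in $B$ and $\{\pi(C_n)\}$ in $A/B$. Both stabilize by hypothesis, say at index $N$. The crucial step is then that $C_n=C_N$ for all $n\geq N$: given $a\in C_n$, either $a\in B$, in which case $a\in C_n\cap B=C_N\cap B\subseteq C_N$, or $a\notin B$, in which case $\pi(a)\in\pi(C_n)=\pi(C_N)$ gives some $a'\in C_N$ with $\pi(a')=\pi(a)$, and the defining property of the Rees congruence $\rho_B$ forces $a'=a\in C_N$. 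The main obstacle I expect is precisely this careful bookkeeping with the Rees congruence: one must exploit the fact that for $a\notin B$ the congruence class $\pi^{-1}(\pi(a))$ is the singleton $\{a\}$, which is what makes the preimages behave well even though Rees factors are not quotients by ideals in the ring sense.

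For (3), I would write $A=a_1S\cup\cdots\cup a_nS$ and observe that each cyclic subact $a_iS$ is the image of the $S$-act homomorphism $S\to A$, $s\mapsto a_is$. Since $S$ is Noetherian as an $S$-act (its subacts are exactly its right ideals), each $a_iS$ inherits ACC by the standard transfer along a surjection: an ascending chain in $a_iS$ pulls back to an ascending chain of subacts of $S$, which stabilizes, and surjectivity transports stabilization back. Iterating part (1) then shows the coproduct $a_1S\,\dot{\cup}\,\cdots\,\dot{\cup}\,a_nS$ is Noetherian, and the canonical surjection from this coproduct onto $A$ induced by the inclusions $a_iS\hookrightarrow A$ transports ACC to $A$ by the same surjection argument, completing the proof.
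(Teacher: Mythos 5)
Your proposal is correct, and it is more self-contained than the paper's. The paper simply cites Zhang--Chen--Wang for parts (1) and (2), whereas you prove them directly; your arguments are sound, including the key observation in (2) that for $a\notin B$ the Rees class $\pi^{-1}(\pi(a))$ is the singleton $\{a\}$, which is exactly what makes the pair of stabilized chains $\{C_n\cap B\}$ and $\{\pi(C_n)\}$ recover $C_n$ (you rightly flag the need to handle possibly empty intersections, since subacts are required to be nonempty here). For part (3) the two routes are close in spirit but not identical: the paper writes $A\cong F/K$ with $F$ a finite coproduct of copies of $S$ and $K$ a subact, then invokes (1) and (2); you instead cover $A$ by the cyclic subacts $a_iS$ and transfer the ascending chain condition along surjective $S$-maps by pulling subacts back. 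Your version buys something real, because the transfer-along-surjections lemma applies to arbitrary epimorphic images, while the paper's appeal to (2) strictly requires the quotient to be a Rees factor by a subact --- and a general epimorphic image of a free act is a quotient by a congruence, not necessarily a Rees congruence. So your argument for (3) quietly repairs a small imprecision in the paper's.
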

\begin{proof}
For proofs of (1) and (2) see~\cite[Lemmas 4.1 and 4.2]{chain}. Now we prove (3).
By~\cite[Proposition 1.5.16  and Corollary 1.4.22]{KKM}, $A\cong F/K$ where $F\cong \dot{\cup}_{i\in I}S_i$ and $K$ is a
subact of $F$ and $|I|<\infty$ and $S_i\cong S$ for each $i\in I$. Since $S$ is Noetherian the result follows from (1) and (2).
\end{proof}
\begin{theorem}\label{krull}
Let $S$ be a Noetherian monoid, $I$ an ideal of $S$ and $A$ a finitely generated
$S$-act. If  $B=\cap_{n\in\mathbb{N}}AI^n$, then $BI=B.$
\end{theorem}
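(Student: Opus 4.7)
My plan is to prove $BI = B$ via two inclusions, $BI \subseteq B$ and $B \subseteq BI$. The first is immediate: for $b \in B$ and $s \in I$, we have $bs \in AI$ (from $b \in A$, $s \in I$) and, for each $m \geq 2$, $b \in AI^{m-1}$ gives $bs \in (AI^{m-1})I = AI^m$. Hence $bs \in \bigcap_{n \geq 1} AI^n = B$.

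For the reverse inclusion $B \subseteq BI$, I would adapt the classical Artin--Rees argument. By Lemma~\ref{noetherian}(3), $A$ is a Noetherian $S$-act, so every subact of $A$ (in particular $B$) is finitely generated by the usual ACC-argument. The key step is to establish the Artin--Rees-type assertion that there exists $k \geq 1$ with
\[
B \cap AI^{n+k} \subseteq (B \cap AI^{k}) I^{n} \quad \text{for every } n \geq 0.
\]
Granting this and taking $n = 1$, the containment $B \subseteq AI^{k+1}$ yields $B = B \cap AI^{k+1} \subseteq (B \cap AI^{k}) I \subseteq BI$, which finishes the proof.

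The Artin--Rees assertion is the main obstacle. In the classical commutative-algebra proof one introduces the Rees ring $\bigoplus_{n \geq 0} I^n$, invokes Hilbert's basis theorem to make it Noetherian, and shows that the graded submodule $\bigoplus_{n \geq 0}(B \cap AI^n)$ of $\bigoplus_{n \geq 0} AI^n$ is finitely generated, the top degree of a homogeneous generating set furnishing $k$. The natural monoidal analogue is the \emph{Rees monoid} $S^{\ast} = \dot{\cup}_{n \geq 0} I^n$ (with $I^0 := S$ and multiplication $(s,n)(t,m) := (st, n+m)$) acting on the \emph{Rees $S^{\ast}$-act} $A^{\ast} = \dot{\cup}_{n \geq 0} AI^n$; then $B^{\ast} = \dot{\cup}_{n \geq 0}(B \cap AI^n)$ is an $S^{\ast}$-subact whose finite generation would deliver $k$. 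The chief difficulty is that Noetherianity of $S$ does not automatically transfer to $S^{\ast}$ (there is no Hilbert basis theorem for monoids in general), so the finite generation of $B^{\ast}$ over $S^{\ast}$ must be extracted by a direct argument combining the ACC on subacts of $A$ with the finite generation of the right ideal $I$ (which follows from $S$ being Noetherian).
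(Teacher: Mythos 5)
Your inclusion $BI\subseteq B$ is fine, but the essential inclusion $B\subseteq BI$ is never actually proved: you reduce it to an Artin--Rees-type containment $B\cap AI^{n+k}\subseteq (B\cap AI^{k})I^{n}$ which you explicitly leave open, and that containment is where the entire difficulty of the theorem lives. Moreover, the route you sketch for it does not survive the passage from modules to acts. The classical Artin--Rees argument is irreducibly additive: the Rees ring $\bigoplus_{n}I^{n}$ is Noetherian by the Hilbert basis theorem, and the graded submodule $\bigoplus_{n}(B\cap AI^{n})$ is finitely generated because homogeneous generators can be combined by \emph{sums}. For an $S$-act there are no sums: $AI^{n}$ is merely the set of products $as_{1}\cdots s_{n}$, subacts are not spanned by linear combinations, and, as you note yourself, nothing forces the Rees monoid $\dot{\cup}_{n}I^{n}$ to be Noetherian. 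Indeed, for $b=as_{1}\cdots s_{n+k}\in B$ the natural candidate $b'=as_{1}\cdots s_{k}$ need not lie in $B$ at all, so there is no evident reason for the containment to hold in $\textbf{Act}-S$. As written, your proposal is a plan that replaces the theorem by a lemma at least as hard, not a proof.

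The paper's argument avoids Artin--Rees entirely and is worth contrasting. It uses Lemma~\ref{noetherian}(3) to make $A$ Noetherian, then chooses a subact $C$ of $A$ maximal with respect to the property $C\cap B=BI$ (possible by the ACC, starting from $BI$ itself). For each $s\in I$ the ascending chain $A_{k}=\{a\in A\mid as^{k}\in C\}$ stabilizes at some $m$, which gives $(As^{m}\cup C)\cap B=BI$; maximality of $C$ then forces either $A=As^{m}\cup C$ (in which case $B=BI$ at once) or $As^{m}\subseteq C$, and from $AI^{m}\subseteq C$ one concludes $B\subseteq AI^{m}\subseteq C$, hence $B=C\cap B=BI$. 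This ``maximal subact'' argument is purely order-theoretic and set-theoretic, which is exactly why it transfers from modules to acts while the graded-module machinery does not. To salvage your approach you would have to supply a genuinely new, sum-free proof of the Artin--Rees containment for acts; nothing in your write-up indicates how to do that.
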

\begin{proof}
By part (3) of Lemma~\ref{noetherian}, $A$ is Noetherian. Put
$\sum$ to be the set of all proper subacts $D$ of $A$ such that $D\cap B=BI.$
Then $\sum$ with set inclusion is a partially ordered set and $\sum\neq\emptyset$ since
$BI\in\sum$. By Zorn's lemma, $\sum$ has a maximal element $C.$
If we can show that $AI^n\subseteq C$ for some $n\in\mathbb{N}$,
then $B\subseteq AI^n\subseteq C$ and since $C\in\sum$ we get the result. To this end, let
$s\in I$ and let $$A_k=\{a\in A\mid as^k\in C\}$$ where
$k\in\mathbb{N}$. Since $A$ is Noetherian and $A_1\subseteq
A_2\subseteq\cdots$ there exists some $m\in\mathbb{N}$ such that $A_m=A_{m+n}$
for every $n\in\mathbb{N}$. We claim that
\begin{equation}\label{claim}
(As^m\cup C)\cap B=BI
\end{equation}
If $x\in (As^m\cup C)\cap B$ then $x\in As^m\cap B$ or $x\in C\cap
B$. If $x\in C\cap B$ then $x\in BI$ since $C\in$. If $x\in
As^m\cap B$ then there exists $a\in A$ such that $x=as^m$ and $xs\in BI$ since
$x\in B.$ Therefore, $as^{m+1}\in BI=C\cap B$ and $as^{m+1}\in C$.
Since $as^{m+1}\in C$ and $A_m=A_{m+1}$, $x=as^m\in C$. Then,
$x\in C\cap B=BI$ and so $(As^m\cup C)\cap B\subseteq BI.$ the reverse inclusion is evident. Thus \ref{claim} holds.
Now, if $A=As^m\cup C$ then $BI=B$ and we get the result. If $A\neq As^m\cup C$, as $As^m\cup
C\in \sum,$ so $C=As^m\cup C$ because $C\subseteq As^m\cup C$
and $C$ is a maximal element of $\sum$. Therefore, $As^m\subseteq C$
and since $s$ is arbitrary, we conclude that $AI^m\subseteq C.$
 Thus, $BI=B$.
\end{proof}
\begin{corollary}({\bf Krull Intersection Theorem})\label{krull intersection theorem}
Let S be a Noetherian monoid in which its unique maximal right
ideal $\mathfrak{M} $ is two-sided. Let $A$ be a finitely generated
quasi-strongly faithful $S$-act with a unique zero element $\theta$.
Then $\cap_{n\in\mathbb{N}}AI^n=\{\theta\}$ for every proper ideal
$I$ of $S$.
\end{corollary}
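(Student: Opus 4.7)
The plan is to combine Theorem~\ref{krull} with the particular form of Nakayama's Lemma (Theorem~\ref{nakayama 2}) applied to the intersection itself. First I would set $B=\cap_{n\in\mathbb{N}}AI^n$ and invoke Theorem~\ref{krull} directly to deduce $BI=B$. Then the entire game is to check that $B$ meets the hypotheses of Theorem~\ref{nakayama 2}, after which that theorem forces $B=\{\theta\}$, which is exactly the desired conclusion.

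To feed $B$ into Theorem~\ref{nakayama 2} I need four things: $B$ is an $S$-act (i.e.\ a subact of $A$), $B$ is finitely generated, $B$ has a unique zero element, and $B$ is quasi-strongly faithful. Being a subact is automatic since each $AI^n$ is a subact of $A$ and an intersection of subacts is a subact. The unique zero is $\theta$ itself: because $\theta s=\theta$ for every $s\in S$, we get $\theta\in AI^n$ for every $n$, hence $\theta\in B$; uniqueness of the zero is inherited from $A$. Quasi-strong faithfulness is also inherited: if $b\in B$ with $b\neq\theta$ and $bs=b$, then $b\in A\setminus\{\theta\}$ and the quasi-strong faithfulness of $A$ gives $s\notin\mathfrak{M}$.

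The one nontrivial point is finite generation of $B$. By Lemma~\ref{noetherian}(3), $A$ is a Noetherian $S$-act, and the standard argument (any subact $C\subseteq A$ admits a chain $\langle c_1\rangle\subseteq\langle c_1,c_2\rangle\subseteq\cdots$ of finitely generated subsubacts which must terminate, forcing some $\langle c_1,\ldots,c_k\rangle=C$) shows that every subact of a Noetherian $S$-act is finitely generated. Applying this to $B$ yields finite generation.

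With all four hypotheses verified and $BI=B$ in hand from Theorem~\ref{krull}, Theorem~\ref{nakayama 2} applied to $B$ in place of $A$ (note that $I$ is still a proper ideal of $S$) gives $B=\{\theta\}$, i.e.\ $\cap_{n\in\mathbb{N}}AI^n=\{\theta\}$. The only step requiring genuine care is the finite-generation argument for $B$; everything else is a matter of transferring the stated hypotheses from $A$ to the subact $B$.
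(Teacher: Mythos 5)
Your proposal is correct and follows essentially the same route as the paper: apply Theorem~\ref{krull} to get $BI=B$, then transfer the hypotheses of Theorem~\ref{nakayama 2} to the subact $B$ (finite generation via Noetherianity of $A$, inherited quasi-strong faithfulness and the zero $\theta$) to conclude $B=\{\theta\}$. You simply spell out the details (notably that subacts of a Noetherian act are finitely generated and that $\theta\in B$) that the paper's proof leaves implicit.
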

\begin{proof}
Put $B=\cap_{n\in\mathbb{N}}AI^n$, by Theorem~\ref{krull} we have
$BI=B$. Since $B\leq A$ and $A$ is Noetherian and quasi-strongly
faithful then $B$ is finitely generated quasi-strongly faithful
$S$-act. Thus $B=\{\theta\}$ by Theorem~\ref{nakayama 2}.
\end{proof}
\begin{corollary}
Let S be a Noetherian monoid in which as an $S$-act is quasi-strongly
faithful and has a unique zero element $\theta$. Then:
\begin{enumerate}
    \item if $S$ is
commutative  then $\cap_{n\in\mathbb{N}}I^n=\{\theta\}$ for
every proper ideal $I$ of $S$.
    \item if the unique maximal right ideal $\mathfrak{M} $ of $S$ is two-sided then
$\cap_{n\in\mathbb{N}}\mathfrak{M}^n=\{\theta\}$.
\end{enumerate}
\end{corollary}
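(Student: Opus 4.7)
The plan is to reduce both parts of the corollary to the Krull Intersection Theorem (Corollary~\ref{krull intersection theorem}) applied to the $S$-act $A := S$ itself. Notice that $S$, viewed as a right $S$-act via its own multiplication, is finitely generated (by $\{1\}$), and by the standing hypothesis of the corollary $S$ is quasi-strongly faithful as an $S$-act with unique zero $\theta$. Moreover, $S$ is a Noetherian monoid. So the only hypothesis of Corollary~\ref{krull intersection theorem} remaining to be verified, case by case, is that $\mathfrak{M}$ is two-sided.

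For part (2) this is given outright, so one may apply Corollary~\ref{krull intersection theorem} with the proper ideal $I=\mathfrak{M}$ and $A=S$, obtaining $\bigcap_{n\in\mathbb{N}} S\mathfrak{M}^n=\{\theta\}$. For part (1), commutativity of $S$ forces every ideal (in particular $\mathfrak{M}$) to be two-sided, so the hypotheses are again met, and Corollary~\ref{krull intersection theorem} yields $\bigcap_{n\in\mathbb{N}} SI^n = \{\theta\}$ for every proper ideal $I$ of $S$.

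To finish, I need the elementary identification $SJ = J$ for any right ideal $J$ of $S$, applied with $J = I^n$ (respectively $J = \mathfrak{M}^n$). Indeed, on the one hand $SJ\subseteq J$ because $J$ is a right ideal, and on the other hand $J = 1\cdot J \subseteq SJ$. Substituting this into the conclusions above produces $\bigcap_{n\in\mathbb{N}} I^n = \{\theta\}$ in case~(1) and $\bigcap_{n\in\mathbb{N}} \mathfrak{M}^n = \{\theta\}$ in case~(2).

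There is no real obstacle here; the whole point of the corollary is that once one has the Krull Intersection Theorem for $S$-acts, the statement about intersections of powers of ideals inside $S$ itself is immediate after recognising $S$ as an $S$-act. The only minor subtlety to mention is the verification that $SI^n = I^n$, which rests on $I^n$ being a right ideal and $1\in S$ acting as the identity of the act structure.
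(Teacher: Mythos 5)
Your reduction is exactly the paper's intended argument: the paper's own proof of this corollary is the single line that both parts are evident from Corollary~\ref{krull intersection theorem}, and you have correctly unpacked what that means, namely taking $A=S$ (finitely generated by $\{1\}$, quasi-strongly faithful with unique zero $\theta$ by the standing hypothesis) and verifying two-sidedness of $\mathfrak{M}$ separately in each case. One justification in your final step is misstated, although the conclusion survives: $SJ\subseteq J$ does \emph{not} follow from $J$ being a right ideal (that only gives $JS\subseteq J$); it requires $J$ to be a left ideal. In part (1) this is automatic from commutativity, and in part (2) you need the easy induction that $\mathfrak{M}$ two-sided implies $\mathfrak{M}^n$ two-sided for all $n$, whence $S\mathfrak{M}^n\subseteq\mathfrak{M}^n$. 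With that repair the identification $SI^n=I^n$ (respectively $S\mathfrak{M}^n=\mathfrak{M}^n$) holds and your argument is complete and matches the paper's.
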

\begin{proof}
(1) and (2) are evident by Corollary~\ref{krull intersection theorem}.
\end{proof}

\centerline{\bf ACKNOWLEDGMENT}

The authors would like to thank Semnan university for its financial
support.
\newpage

\end{document}